\documentclass[12pt, reqno]{amsart}
\usepackage{amsmath,amssymb,amsthm,graphicx,mathrsfs,url} 
\usepackage{amsmath,amssymb,amsthm,tikz}
\usetikzlibrary{decorations.pathreplacing}
\usepackage[toc,page]{appendix}
\usepackage{comment}
\usepackage[dvipsnames]{xcolor}
\usepackage[margin=0.9in]{geometry}
\usepackage[colorlinks=true, citecolor=Green]{hyperref}
\usepackage[english]{babel}
\usepackage[T1]{fontenc}
\usetikzlibrary{arrows.meta,calc}
\usepackage{amsthm}
\usepackage{ragged2e} 
\usepackage{etoolbox}
\AtBeginEnvironment{thebibliography}{\small}

\usetikzlibrary{decorations.pathreplacing}

\newcommand{\eps}{\varepsilon}

\newtheorem{theorem}{Theorem}[section]
\newtheorem{lemma}[theorem]{Lemma}

\newtheorem{conjecture}[theorem]{Conjecture}
\newtheorem{corollary}[theorem]{Corollary}

\newtheorem{proposition}[theorem]{Proposition}
\newtheorem*{theoremfrench*}{Théorème}
\newtheorem*{theorem*}{Theorem}

\numberwithin{equation}{section}
\author{Tristan Humbert}
\email{humbertt@imj-prg.fr}
\address{Sorbonne Université, Paris France 75005.}

\begin{document}
\begin{abstract}
Let $(M,g)$ be a closed negatively curved surface. If $g$ is strictly $\tfrac 19$-pinched and has the same unmarked length spectrum as a hyperbolic metric, we show that $g$ is hyperbolic. As a consequence, we show that any hyperbolic metric on a surface admits a $C^2$-neighborhood in the space of metrics in which it is characterized by its unmarked length spectrum, up to isometry.
\end{abstract}
\title{Local unmarked length spectrum rigidity for hyperbolic surfaces} 
\maketitle
\section{Introduction}
\subsection{Setting}
Let $(M^n,g)$ be a smooth closed $n$-dimensional manifold equipped with a smooth Riemannian metric $g$ of negative sectional curvature.  Let $\mathcal C$ denote the set of free homotopy classes of closed curves on $M$. We define the \emph{marked length spectrum} of the metric $g$ to be
$$\mathcal L_g: \mathcal C\to (0,+\infty), \quad c\mapsto \ell_g(\gamma_g(c)), $$
where $\ell_g$ is the length for the metric $g$ of $\gamma_g(c)$, the unique $g$-geodesic in the class $c$.

Another important invariant is given by the \emph{unmarked length spectrum}, namely, the collection of lengths of closed geodesics:
\begin{equation}
\label{LS}
L(g):=\{\ell_g(\gamma)\mid \gamma \text{ is a closed } g\text{-geodesic} \}\subset (0,\infty),
\end{equation}
where the lengths of closed geodesics are counted with multiplicity.

It is a long-standing conjecture of Burns and Katok  \cite{burnskatok} that two negatively curved metrics are isometric if and only if their marked length spectra are equal. Katok proved it \cite{Ka} when the two metrics are in the same conformal class. Then Croke and Otal independently solved the conjecture for surfaces \cite{Cro90,Ota90}. Their result was extended recently to surfaces with Anosov geodesic flow by Guillarmou, Lefeuvre and Paternain \cite{GuLefPa}.

The conjecture is still open in higher dimension. However, Hamenstädt \cite{Ham99} solved the conjecture if one of the metrics is locally symmetric. More recently, Guillarmou and Lefeuvre \cite{GL19} (see also the related work of Guillarmou, Knieper and Lefeuvre \cite{GuKnLef}) showed the conjecture locally, that is, if the two metrics are close enough in some $C^k$-topology. We note that Butt obtained quantitative versions of marked length spectrum rigidity on surfaces and of Hamenstädt's result \cite{But1,Butt26,But25}.

It is known since the work of Vigneras \cite{Vig80} (see also \cite{Sun85}), that there exists non-isometric hyperbolic metrics with the same length spectra. Hence, the length spectrum cannot characterize  negatively curved metrics up to isometry.

Nevertheless, in view of the recent breakthrough of Guillarmou and Lefeuvre on the local rigidity of the marked length spectrum, it seems reasonable to state the following conjecture.
\begin{conjecture}
\label{1}
Let $(M^n,g)$ be a negatively curved closed manifold of dimension $n\geq 2$. There exists $k\in \mathbb N$ and $\epsilon>0$ such that for any negatively curved metric $g'$ with $\|g-g'\|_{C^k}<\epsilon$, one has $L(g)=L(g')$ if and only if $g$ and $g'$ are isometric.
\end{conjecture}
The conjecture seems to be folklore among specialists of the marked length spectrum. The length spectrum is known to be rigid along deformations by the works of Guillemin, Kazhdan \cite{GK80} and Croke, Sharafutdinov \cite{CS98}. In a recent preprint \cite{DWWMRO}, DeWitt, Durham, Marshall Reber and O'Hare proved the conjecture when the metric $g$ has \emph{exponentially separated length spectrum}, see \cite[Equation (1.1.1)]{DWWMRO}. This condition is known to hold for a $C^k$-dense set of negatively curved metrics (but is not generic) \cite{DJ16,Schenck2020}.
\subsection{Statements of results}
The purpose of this short note is to prove Conjecture \ref{1} when $g_0$ is a hyperbolic metric on a surface and $k=2$.
\begin{theorem}
\label{theo:main}
Let $(M^2,g_0)$ be a closed hyperbolic surface. Then there exists $\epsilon>0$ such that for any negatively curved metric with $\|g-g_0\|_{C^2}<\epsilon$, we have $L(g)=L(g_0)$ if and only if $g$ is isometric to $g_0$.
\end{theorem}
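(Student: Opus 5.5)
The plan is to reduce the theorem to the abstract's statement: if $g$ is strictly $\tfrac19$-pinched and $L(g)=L(g_0)$ with $g_0$ hyperbolic, then $g$ is hyperbolic. Theorem \ref{theo:main} follows from this. If $\|g-g_0\|_{C^2}<\epsilon$ with $\epsilon$ small, the Gaussian curvature of $g$ is $C^0$-close to $-1$, so $g$ is strictly $\tfrac19$-pinched. Once $g$ is known to be hyperbolic, both $g$ and $g_0$ lie in Teichmüller/moduli space of the same genus, since the area is fixed by Gauss--Bonnet. By Wolpert's theorem, isospectral hyperbolic surfaces come in finitely many isometry classes; moreover, a hyperbolic metric $C^2$-close to $g_0$ is close in moduli space to $g_0$. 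Shrinking $\epsilon$ so as to exclude the finitely many non-isometric isospectral partners of $g_0$ then forces $g$ to be isometric to $g_0$. The converse direction of the theorem is trivial.

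For the rigidity statement, I would extract information from the common unmarked length spectrum. The first invariant is the topological entropy: the counting function $\#\{\gamma:\ell(\gamma)\le T\}\sim e^{hT}/(hT)$ gives $h_{top}(g)=h_{top}(g_0)=1$. The key point is that the lengths of the hyperbolic spectrum are highly structured. For $g_0$, Selberg's trace formula shows that the distribution of lengths, and in particular the pair correlation and the exponential error terms, are governed by the Laplace spectrum. So the plan is to read off from $L(g)$ the full Ruelle zeta function $\zeta_g(s)=\prod_\gamma(1-e^{-s\ell(\gamma)})$, which depends only on the unmarked spectrum. Hence $\zeta_g=\zeta_{g_0}=Z_{g_0}(s)/Z_{g_0}(s+1)$, a meromorphic function on $\mathbb C$ whose poles and zeros sit at explicit locations determined by Laplace eigenvalues; for instance, there is a pole at $s=0$ of order $\chi$-related multiplicity and resonances on $\Re s=1/2$ or below.

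For $g$, the Ruelle zeta function also continues meromorphically to $\mathbb C$ (Dyatlov--Zworski, Giulietti--Liverani--Pollicott), and its resonances are Pollicott--Ruelle resonances of the geodesic flow. The pinching enters as follows. Under strict $\tfrac19$-pinching, the first band of resonances is isolated: there is a spectral gap between the first band, $\Re s$ near $\Re$ of $-\tfrac12\times$ unstable exponent, and the second band. Resonances in the first band relate to those of $-\Delta$-type operators on horocyclic/unstable bundles. I would compare the multiplicity of the pole at $s=0$, which equals $|\chi(M)|$ for surfaces (Dyatlov--Zworski), and then use the location of the resonance band. A nonhyperbolic $g$ should produce resonances in a strip whose position depends on the Lyapunov exponents. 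Matching with $\zeta_{g_0}$, which has infinitely many zeros exactly on $\Re s=-1/2$ (from $Z_{g_0}(s+1)$ and the Laplacian eigenvalues), should force the unstable Jacobian's cohomology class to be constant, so that the Liouville and Bowen--Margulis measures coincide. By Katok's entropy rigidity, $g$ then has constant curvature.

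The main obstacle is this last step: turning equality of zeta functions into equality of measures of maximal entropy and Liouville measure. My first attempt would use a Weyl law for the first band of resonances, under $\tfrac19$-pinching, via Faure--Tsujii-type band structure. This law counts resonances in the first band with density proportional to the Liouville volume and centered at $-\tfrac12\int$ of the unstable Jacobian. Equating it with the hyperbolic count pins down both the area and the center of the band. Combined with $h_{top}=1$ and Katok's inequality $h_{top}\ge h_{Liou}$ with equality iff constant curvature, this would conclude.
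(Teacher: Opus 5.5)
The reduction is the same as the paper's: $C^2$-closeness gives strict $\tfrac19$-pinching, Theorem \ref{theo:hyp} makes $g$ hyperbolic, and one then concludes among hyperbolic metrics. For that last step you take a different but valid route. You use finiteness of isospectral classes (usually credited to McKean rather than Wolpert), together with $C^0$-continuity of marked lengths, so that $g\mapsto[g]$ is continuous into the Hausdorff moduli space. The paper instead uses discreteness of $L(g_0)$ to force $\mathcal L_{g_1}=\mathcal L_{g_0}$ on a finite set of classes that determines a point of Teichm\"uller space. That argument avoids the finiteness theorem and works in a $C^0$-neighbourhood.

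The gap is in the core step, i.e.\ the proof of Theorem \ref{theo:hyp}. Your strategy of locating the first band at $-\tfrac12 h_{\mathrm{Liou}}(g)$ and then applying Katok is the paper's, but two points that make it work are missing or wrong.

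\textbf{First gap: the zeta-function bookkeeping.} On $\mathrm{Re}\, s=-\tfrac12$, the function $\zeta_{g_0}=Z_{g_0}(s)/Z_{g_0}(s+1)$ has \emph{poles} of order $\nu_0(r)$, not zeros. (Likewise $s=0$ is a zero of order $|\chi(M)|$; this is topological and carries no information here.) The distinction matters because $m_R=m_1-2m_0$. A pole of order $\nu$ forces $m_0^g\ge \nu/2$, i.e.\ resonances of $\mathbf X_0^g$, the operator whose band structure Faure--Tsujii describe. A zero would only force $1$-form resonances. You also cannot ``equate'' the first-band count of $g$ with the hyperbolic count, since $\zeta_g$ only sees $2m_0^g-m_1^g$ and $1$-form resonances may cancel. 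What you actually get is the one-sided bound $m_0^g(-\tfrac12+ir)\ge \tfrac12\nu_0(r)$ (Lemma \ref{lemm:Weyl}): at least half the Weyl density of function resonances of $g$ lies exactly on the line. The area is not pinned down this way, and it is not needed.

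\textbf{Second gap: placing the line in the first band.} You never show that $\mathrm{Re}\, s=-\tfrac12$ lies in the \emph{first} band $[\gamma_0^-(g),\gamma_0^+(g)]$. This does not follow from pinching, which is scale invariant, whereas the positions of the bands scale with the metric. If the line sat in a band $k\ge 1$, concentration of the first band near $-\tfrac12 h_{\mathrm{Liou}}(g)$ would give no contradiction. The paper supplies the missing normalization as follows:
\begin{enumerate}
\item From $h_{\mathrm{top}}(g)=1$, Bishop--Gromov and Manning give $\sqrt C\ge 1$, where $-C\le K_g\le -\alpha C$ and $\alpha>\tfrac19$.
\item Hence $\gamma_1^+(g)\le -\tfrac32\sqrt{\alpha C}<-\tfrac12$.
\item The band theorem \cite[Theorem 1.7]{FT} then forces the resonances on the line into band $0$.
\item The concentration result \cite[Theorem 1.13]{FT}, with band centre $\tfrac12\int J^s_g\,d\mu_g=-\tfrac12 h_{\mathrm{Liou}}(g)$, yields $h_{\mathrm{Liou}}(g)=1=h_{\mathrm{top}}(g)$.
\end{enumerate}
Katok \cite{Ka} then concludes, as you say.
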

Note that Theorem \ref{theo:main} was shown in \cite{Gogolev18} under the additional hypothesis that $\mathrm{Area}_g(M)=\mathrm{Area}_{g_0}(M)$. 
We would like to stress that our result does not follow directly from  \cite[Theorem A]{DWWMRO}. Indeed, the exponentially separated length spectrum condition is known to fail for a Baire generic set of hyperbolic metrics on a surface, see \cite[Theorem 3.1]{DJ16}. Moreover, we obtain the rigidity statement in a $C^2$-neighborhood of $g_0$ while  \cite[Theorem A]{DWWMRO} is obtained for a $C^k$-neighborhood where $k$ is typically larger than $2$ (see the proof of \cite[Theorem 5.2.1]{DWWMRO}).

Note however that hyperbolic metrics in dimension at least three always have exponentially separated length spectrum \cite[Corollary 2.7]{DJ16}. Hence, as a consequence of Theorem \ref{theo:main} and \cite[Corollary A]{DWWMRO}, we deduce.
\begin{corollary}
Let $(M^n,g_0)$ be a closed hyperbolic manifold of dimension $n\geq 2$, then there exists a $C^\infty$-neighborhood $\mathcal U$ of $g_0$ in the space of negatively curved metrics such that for any $g\in \mathcal U$, one has $L(g)=L(g_0)$ if and only if $g$ and $g_0$ are isometric.
\end{corollary}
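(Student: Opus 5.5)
The corollary splits by dimension, and in each case I will reduce it to a result already available.

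\emph{Case $n=2$.} Here the statement follows directly from Theorem \ref{theo:main}. That theorem gives $\epsilon>0$ such that rigidity holds on the $C^2$-ball $\{g:\|g-g_0\|_{C^2}<\epsilon\}$ of negatively curved metrics. Since $\|g-g_0\|_{C^2}\le\|g-g_0\|_{C^k}$ for every $k\ge2$, this ball contains a $C^\infty$-neighborhood $\mathcal U$ of $g_0$, because the $C^\infty$ topology is finer than the $C^2$ topology. For every $g\in\mathcal U$ we therefore get: $L(g)=L(g_0)$ if and only if $g$ is isometric to $g_0$.

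\emph{Case $n\ge3$.} I will not use Theorem \ref{theo:main}. Instead I invoke \cite[Corollary 2.7]{DJ16}, which says that a closed hyperbolic manifold of dimension at least three has exponentially separated length spectrum in the sense of \cite[Equation (1.1.1)]{DWWMRO}. With this hypothesis verified for $g_0$, \cite[Corollary A]{DWWMRO} (equivalently Theorem A there) gives $k\in\mathbb N$ and $\epsilon>0$ such that every negatively curved $g$ with $\|g-g_0\|_{C^k}<\epsilon$ and $L(g)=L(g_0)$ is isometric to $g_0$. The set $\{g:\|g-g_0\|_{C^k}<\epsilon\}$ is open in the $C^\infty$ topology, so it serves as the required $\mathcal U$.

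\emph{Converse direction.} This holds in every dimension. If $g=\phi^*g_0$ for a diffeomorphism $\phi$, then $\phi$ maps closed $g$-geodesics bijectively onto closed $g_0$-geodesics and preserves their lengths. It therefore preserves lengths counted with multiplicity, so $L(g)=L(g_0)$.

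\emph{Main obstacle.} The only point needing care is checking that the hypotheses of \cite{DWWMRO} are met exactly as stated. First, the exponential separation property in \cite{DJ16} must be the same notion that \cite{DWWMRO} uses, with matching constants and matching treatment of multiplicities. Second, their neighborhood must be allowed to consist of negatively curved metrics, which is automatic once $\epsilon$ is small since negative curvature is a $C^2$-open condition. If the $C^k$-neighborhood of \cite{DWWMRO} requires $k$ larger than the regularity I have in mind, this causes no difficulty, because we only claim a $C^\infty$-neighborhood.
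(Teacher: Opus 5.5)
Your proposal is correct and follows essentially the same route as the paper. For $n=2$ you invoke Theorem \ref{theo:main}, whose $C^2$-ball contains a $C^\infty$-neighborhood of $g_0$; for $n\geq 3$ you combine the exponential separation of hyperbolic length spectra from \cite[Corollary 2.7]{DJ16} with \cite[Corollary A]{DWWMRO}, exactly as the paper does.
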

This should be compared with the local audibility result on negatively curved locally symmetric manifolds of Sharafutdinov \cite{Sha09}, where it is shown that locally symmetric metrics admit a $C^{\infty}$-neighborhood in which they are determined, up to isometry, by the spectrum of their Laplace operator. 

On hyperbolic surfaces, the Selberg trace formula implies that the Laplace  and length spectra (counted with multiplicities) determine each other \cite{Hu}. This equivalence does not persist for variable curvature metrics: the wave trace formula \cite{DG75} determines the set of lengths of closed geodesics, but in general not their multiplicities. Thus, although the two results are closely related in spirit, neither is a direct consequence of the other.

Theorem \ref{theo:main} is  actually a consequence of the following result.We denote by $K_g$ the  sectional curvature of a metric $g$. Recall that for $\alpha\in (0,1]$, we say that a metric $g$ is strictly $\alpha$-pinched~if \begin{equation}
\label{eq:pinching}
\exists C>0, \quad -C<K_g<-\alpha C.
\end{equation}

\begin{theorem}
\label{theo:hyp}
Let $(M^2,g_0)$ be a closed hyperbolic surface. Let $g$ be a strictly $\tfrac 19$-pinched negatively curved metric on $M$ and suppose that $L(g)=L(g_0)$. Then $g$ is a hyperbolic metric.
\end{theorem}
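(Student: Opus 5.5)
The plan is to reduce Theorem~\ref{theo:hyp} to the equality case of Katok's entropy–area inequality. For a negatively curved metric $g$ on a closed surface $M$ of Euler characteristic $\chi(M)<0$, Katok's inequality reads
$$h_{\mathrm{top}}(g)^2\,\mathrm{Area}_g(M)\;\geq\; 2\pi|\chi(M)|,$$
with equality if and only if $g$ has constant curvature. By Gauss–Bonnet, $g_0$ satisfies $h_{\mathrm{top}}(g_0)=1$ and $\mathrm{Area}_{g_0}(M)=2\pi|\chi(M)|$. It therefore suffices to show that $L(g)=L(g_0)$ forces $h_{\mathrm{top}}(g)=1$ and $\mathrm{Area}_g(M)\le \mathrm{Area}_{g_0}(M)$.

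\emph{Step 1: entropy.} By Margulis' asymptotics, the number of closed $g$-geodesics of length at most $T$ is asymptotic to $e^{h_{\mathrm{top}}(g)T}/(h_{\mathrm{top}}(g)T)$. This counting function is read off from $L(g)$ counted with multiplicity. Hence $h_{\mathrm{top}}(g)=h_{\mathrm{top}}(g_0)=1$.

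\emph{Step 2: equal zeta functions.} Equality of the unmarked spectra with multiplicity means that the Ruelle zeta functions coincide as Euler products:
$$\zeta_{g}(s)=\prod_{\gamma}\bigl(1-e^{-s\ell(\gamma)}\bigr)\qquad\text{and}\qquad \zeta_{g_0}(s)$$
agree for $\mathrm{Re}\,s>1$. By Giulietti–Liverani–Pollicott and Dyatlov–Zworski, $\zeta_g$ extends meromorphically to $\mathbb C$, so the two zeta functions agree everywhere by analytic continuation. In particular their zeros and poles coincide with multiplicity. For $g_0$ these are explicit: $\zeta_{g_0}(s)=Z(s)/Z(s+1)$, where $Z$ is the Selberg zeta function. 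Its zeros in the first band $\{\mathrm{Re}\,s=\tfrac12\}$ are the points $s=\tfrac12\pm i r_j$, with $\tfrac14+r_j^2$ the Laplace eigenvalues. By Weyl's law, their number up to height $R$ is $\sim \mathrm{Area}_{g_0}(M)\,R^2/(4\pi)$. The further bands sit at $\mathrm{Re}\,s=\tfrac12-k$.

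\emph{Step 3: area from the first band.} For $g$, I would invoke the Faure–Tsujii band structure for the Ruelle resonances of a contact Anosov flow. Under the pinching hypothesis the unstable Jacobian is controlled by $\sqrt C$ above and by $\sqrt C/3$ below, so the first band lies in the strip $\{\mathrm{Re}\,s>-\lambda_{\min}/2\}$ and is separated by a gap from the second band. This separation is where strict $\tfrac19$-pinching enters: it is exactly what makes the first band's lower edge lie strictly above the upper edge of the next band, accounting for the shift by $\lambda$ between consecutive bands. In the first band, Faure–Tsujii give a Weyl law whose leading coefficient is the symplectic volume of the unit cosphere region, that is, $\mathrm{Area}_g(M)R^2/(4\pi)$ up to the normalisation forced by $h_{\mathrm{top}}=1$. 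The poles and zeros of $\zeta_g$ in this strip are governed by the first-band resonances, with no cancellation from the quotient structure thanks to the gap. Matching them with those of $\zeta_{g_0}$ then gives $\mathrm{Area}_g(M)=\mathrm{Area}_{g_0}(M)$. Katok's equality case concludes that $g$ is hyperbolic.

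The main obstacle is Step 3. One must make the band separation quantitative in terms of the pinching constant, show that the $\tfrac19$ threshold really isolates the first band inside $\zeta_g$ (the factorisation of $\zeta_g$ into contributions from the forms on $E_u^*$ of different degrees must not produce cancellations in the counting strip), and extract the area from the leading Weyl term without needing the full set of resonances. I would first attempt this by writing $\zeta_g$ as an alternating product of dynamical determinants acting on $0$-, $1$- and $2$-forms. I would then use the pinching to show that only the $1$-form determinant has zeros in a strip $\{\mathrm{Re}\,s>\tfrac12-\delta\}$, and apply the Faure–Tsujii counting there.
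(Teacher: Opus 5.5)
The gap is in Step 3. Your reduction to Katok's entropy--area inequality is valid, and Steps 1--2 are fine. With $h_{\mathrm{top}}(g)=1$ it is exactly how the result follows under an extra equal-area hypothesis. What fails is the claim of ``no cancellation from the quotient structure thanks to the gap''.

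\emph{Which resonances are being counted.} The Faure--Tsujii Weyl law with coefficient $\mathrm{Area}_g(M)/2\pi$ counts resonances of $\mathbf X_0^g$, i.e.\ on functions. For $g_0$, the first band of $\mathbf X_0^{g_0}$ corresponds to the \emph{poles} of $\zeta_{g_0}$ on $\mathrm{Re}\,s=-\tfrac12$. The zeros on $\mathrm{Re}\,s=\tfrac12$ are resonances on $1$-forms, not function resonances.

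\emph{The cancellation you rule out actually occurs.} On $-\tfrac12+i\mathbb R$ one has $m_R=m_1-2m_0$, and the $1$-form term is nonzero. For $g_0$, the known Ruelle spectrum of hyperbolic surfaces gives $m_0^{g_0}(-\tfrac12+ir)=\nu_0(r)$, while $m_R^{g_0}(-\tfrac12+ir)=-\nu_0(r)$. Hence $m_1^{g_0}(-\tfrac12+ir)=\nu_0(r)$: the second band of $1$-forms sits in the same vertical strip as the first band of functions. The pinching condition $\gamma_1^+<\gamma_0^-$ only separates bands of $\mathbf X_0^g$ and says nothing about $m_1^g$.

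\emph{Consequence.} The identity $\zeta_g=\zeta_{g_0}$ only gives $2m_0^g-m_1^g=\nu_0$ on that line and $2m_0^g=m_1^g$ elsewhere in the strip. So first-band resonances of $g$ off the line are invisible to the zeta function. All you can extract is $m_0^g(-\tfrac12+ir)\geq\tfrac12\nu_0(r)$, and with the first-band Weyl law this yields only $\mathrm{Area}_g(M)\geq\tfrac12\mathrm{Area}_{g_0}(M)$. You need the upper bound $\mathrm{Area}_g(M)\le\mathrm{Area}_{g_0}(M)$; the reverse inequality is already Katok's once $h_{\mathrm{top}}(g)=1$. That upper bound would require an upper bound on $m_1^g$, which nothing you invoke supplies.

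Your fallback is to count zeros in $\mathrm{Re}\,s>\tfrac12-\delta$, which are indeed purely $1$-form resonances. This would need a first-band Weyl law for $\mathbf X_1^g$, and Faure--Tsujii do not provide one in the form you cite. Their results concern smooth bundles, and the rank-one component of $\mathscr E_0^1$ carrying that band is only as regular as the stable/unstable distributions. On the full bundle $\mathscr E_0^1$, the expanding and contracting parts mix, and the resulting band bounds isolate nothing.

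The missing idea is to use the \emph{location} of the first band rather than its \emph{density}. The lower bound $m_0^g(-\tfrac12+ir)\geq\tfrac12\nu_0(r)$ survives the cancellation because $m_1^g\geq 0$, and it is all that is needed. The paper argues as follows:
\begin{itemize}
\item It shows $C\geq 1$ (Bishop--Gromov plus Manning, using $h_{\mathrm{top}}(g)=1$). By the pinching, every band with $k\geq1$ then lies strictly to the left of $-\tfrac12$.
\item The line $-\tfrac12+i\mathbb R$ carries a positive density of resonances, so it must lie in the first band.
\item Faure--Tsujii's concentration theorem says first-band resonances concentrate on $\mathrm{Re}\,s=-\tfrac12 h_{\mathrm{Liou}}(g)$. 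This forces $h_{\mathrm{Liou}}(g)=1=h_{\mathrm{top}}(g)$.
\item Katok's entropy rigidity then gives constant curvature.
\end{itemize}
Separately, you misplace the first band: it lies in $\mathrm{Re}\,s\in[-\tfrac12\lambda_{\max},-\tfrac12\lambda_{\min}]$, not in $\{\mathrm{Re}\,s>-\tfrac12\lambda_{\min}\}$.
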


As explained above, Vigneras constructed examples of non-isometric hyperbolic metrics with the same length spectra. We prove however in Lemma \ref{lemma:final} that any hyperbolic metric admits a $C^0$-neighborhood in the space of hyperbolic metrics in which it is characterized by its length spectrum. Theorem \ref{theo:main} then follows from this observation and Theorem \ref{theo:hyp}. 

\textbf{Acknowledgements.} The author would like to thank Colin Guillarmou and Thibault Lefeuvre for their guidance and advice during the writing of this paper. The author would also like to thank Frédéric Faure for encouraging discussions and for answering some questions on the paper \cite{FT}.

This research was supported by the European Research Council (ERC) under
the European Union’s Horizon 2020 research and innovation programme (Grant agreement no. 101162990 — ADG).
\section{Preliminaries}
\subsection{Anosov flows}Let $(M^2,g)$ be a closed negatively curved surface. Let $(\varphi_t^g)_{t \in \mathbb R}$ denote the geodesic flow on the unit tangent bundle $S^gM:=\{(x,v)\in TM , \|v\|_g=1\}$ of $M$.
The geodesic flow is an \emph{Anosov flow}, which means that if we denote by $X^g=\tfrac{d}{dt}\varphi_t^g|_{t=0}$ the geodesic vector field, then there exists a flow-invariant, continuous splitting 
\begin{equation}
\label{eq:Anosov}
T(S^gM)=E_u^g\oplus \mathbb R X^g\oplus E_s^g,
\end{equation}of the tangent bundle such that the differential of the flow $d\varphi_t^g$ contracts (resp. expands) exponentially on $E_s^g$ (resp. $E_u^g$).
\subsection{Entropies}
\label{sec:entropy}
Denote by $\mathcal C$ the set of free homotopy classes of closed loops on $M$ and recall that any non-trivial free homotopy class $c$ contains a unique $g$-geodesic $\gamma_g(c)$, see for instance \cite[Theorem 13.12]{GuMaz}. The topological entropy is given by (see \cite[Theorem 7.6.9]{FishHas})
\begin{equation}
\label{eq:top2}
h_{\mathrm{top}}(\varphi_1)=\lim_{T\to+\infty}\frac 1T \ln \mathrm{Card}\{c \in \mathcal C \mid  \ell_g(\gamma_g(c))\leq T\}.
\end{equation}
In other words, the topological entropy is the exponential growth rate, when $T\to+\infty$, of the number of closed geodesics of length less than $T$. 
We define the \emph{stable Jacobian}:
\begin{equation}
\label{eq:Js1}
J^s_g(x):=\frac{d}{dt}\ln\mathrm{det}(d\varphi_t^g(x)_{|E_s(x)})|_{t=0}.
\end{equation}
 Let $\mu_g$ denote the \emph{Liouville measure} associated to the metric $g$. Recall that it is the volume form associated to the Sasaki metric on $SM$, normalized so that $\mu_g(SM)=1$, see for instance \cite[Chapter 1.7]{GuMaz}. We let $h_{\mathrm{Liou}}(g)$ denote the \emph{Liouville entropy}, i.e., the metric entropy of the geodesic flow $(\varphi^g_t)_{t\in \mathbb R}$ with respect to $\mu_g$. 
 We have \begin{equation}
 \label{eq:SRBentropy}
 h_{\mathrm{Liou}}(g)=-\int_{S^gM}J^s_g(v)d\mu_g(v),
 \end{equation}
 see for instance \cite[Corollary 7.4.5]{FishHas}.
\subsection{Pollicott-Ruelle resonances} 
 For $0\leq k \leq 2$, let
$$ \mathscr E_0^k:=\{ u\in C^{\infty}(S M; \Lambda^k T^* (SM)) \mid \iota_{X}u=0\}, $$
where $\iota_X$ denotes the contraction by $X$. The vector field $X$ acts by Lie derivative:
\begin{equation}
\label{eq:Lie}
\mathbf{X}_k^g:\mathscr E_0^k\to \mathscr E_0^k, \quad  \mathbf{X}_k^g \omega:= \mathcal L_{X^g} \omega= \frac{d}{dt}|_{t=0}(\varphi_t^g)^*\omega.
\end{equation}
In this setting, it is now well understood that one can associate to $\mathscr E_0^k$ a discrete spectrum $\sigma(\mathbf X_k^g)\subset \mathbb C$, the \emph{Pollicott-Ruelle resonances}, by making $\mathbf {X}_k^g$ act on specially designed \emph{anisotropic spaces}, see for instance \cite{BKL,BL,BT,GouLiv,Fau08,Fau10,FT}. The set $\sigma(\mathbf X_k^g)$ is intrinsic to the Anosov flow and governs important dynamical properties such as the decay rate of correlations \cite{Liv,TsuZh}.

For our purpose, it will be sufficient to know that any  resonance $\lambda_0\in \sigma(\mathbf X_k^g)$ has finite algebraic multiplicity, which we will denote by $m_k^g(\lambda_0)$.
 \subsection{Ruelle zeta function}
 \label{Ruelle}
 In this section, we review some basic facts about the \emph{Ruelle zeta function}. Define for $s\in \mathbb C$ such that $\mathrm{Re}(s)\gg 1$, 
 \begin{equation}
\label{eq:Zeta}
\zeta_R^g(s):=\prod_{{\gamma^{\sharp}\in \Gamma^{\sharp}}}\big(1-e^{-s \ell_g({\gamma^{\sharp}})}\big), 
\end{equation}
here, $\Gamma^{\sharp}$ is the set of all \emph{primitive geodesics}, that is, geodesics with minimal periods. 
The Ruelle zeta function extends meromorphically to the complex plane \cite{DyaZw} and, if we denote by $m_R^g(\lambda_0)$ the multiplicity of $\lambda_0$ as a zero of $\zeta_R^g$  (with the convention that $m_R^g(\lambda_0)<0$ for a pole $\lambda_0$) then we have
\begin{equation}
\label{eq:zerozeta}
m_R^g(\lambda_0)=m_1^g(\lambda_0)-m_0^g(\lambda_0)-m_2^g(\lambda_0)=m_1^g(\lambda_0)-2m_0^g(\lambda_0),
\end{equation}
see \cite[Equation (3.1)]{zazi} and where we used that $m_0^g(\lambda_0)=m_2^g(\lambda_0)$.

\section{Proof of Theorem \ref{theo:hyp}}
In this section, we prove Theorem \ref{theo:hyp}. Let $(M^2,g_0)$ be a closed hyperbolic surface. Recall that for $s\in \mathbb C$, one has the relation
\begin{equation}
\label{eq:zetaRel}
\zeta_R^{g_0}(s)=\frac{Z_S^{g_0}(s)}{Z_S^{g_0}(s+1)}, 
\end{equation}
where $Z_S^{g_0}$ is the \emph{Selberg zeta function} of $g_0$, see for instance, \cite[p. 257]{Baladi98}. We will not recall the definition of the Selberg zeta function but recall that it admits a holomorphic extension to $\mathbb C$. Moreover, the zeros of $Z_S^{g_0}$ are located either on the real axis or on $\tfrac 12+i\mathbb R$. Let 
$$\mathcal R_0:=\{r\geq 0 \mid \tfrac 14+r^2\in \mathrm{Spec}(\Delta_{g_0})\}.$$ Then the $\tfrac 12\pm ir\in \tfrac 12+i\mathbb R$ for $r\in \mathcal R_0$ are zeros of $Z_S^{g_0}$ with multiplicity equal to 
$$\nu_0(r):=\mathrm{dim}\big(\mathrm{Ker}\big(\Delta_{g_0}-( \tfrac 14+r^2)\big)\big),$$ see \cite[Theorem 4.11]{Hej76}. In particular, we deduce from \eqref{eq:zetaRel} and the Weyl law (see for instance \cite[Theorem 3.5]{DG75}) that for any fixed $\delta>0$, we have 
\begin{equation}
\label{eq:Weyl}
-\sum_{\omega \leq r\leq \omega+\delta,\ r\in \mathcal R_0}m_R^{g_0}(-\tfrac 12+ir)= \sum_{\omega \leq r\leq \omega+\delta,\ r\in \mathcal R_0}\nu_0(r)=\frac{\mathrm{Area}_{g_0}(M)}{2\pi}\omega \delta+o(\omega),\quad \text{ when } \omega \to+\infty.
\end{equation}
The next lemma shows that if a negatively curved metric has the same length spectrum as $g_0$, then the number of Pollicott-Ruelle resonances for the action on functions on the axis $-\tfrac 12+i\mathbb R$ is bounded below by Weyl term.
\begin{lemma}
\label{lemm:Weyl}
Let $(M,g)$ be a negatively curved surface. Suppose that $L(g)=L(g_0)$. Then we have, for any $\delta>0$, 
$$
\sum_{\omega \leq r\leq \omega+\delta,\ r\in \mathcal R_0}m_0^g(-\tfrac 12+ir) \geq \frac{\mathrm{Area}_{g_0}(M)}{4\pi}\omega \delta+o(\omega),\quad \text{ when } \omega \to+\infty.
$$
\end{lemma}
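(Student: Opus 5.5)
Since $L(g)=L(g_0)$, with lengths counted with multiplicity, the two Ruelle zeta functions agree. Indeed, for $\mathrm{Re}(s)\gg1$ we have $\log\zeta_R^g(s)=-\sum_{\gamma}\frac{1}{k(\gamma)}e^{-s\ell_g(\gamma)}$, where the sum runs over all closed geodesics and $k(\gamma)$ is the multiplicity of $\gamma$ as an iterate of a primitive one. The plan is to recover these primitive-multiplicity weights from the multiset $L(g)$ alone. A closed geodesic of length $\ell$ that is the $k$-th iterate of a primitive geodesic of length $\ell/k$ is recorded in $L$ through the occurrence of $\ell/k$. Equivalently, $\zeta_R(s)=\prod_{\ell\in L^\sharp}(1-e^{-s\ell})$, and $L^\sharp$ (primitive lengths with multiplicity) is determined by $L$ by induction on length: the multiplicity of $\ell$ in $L$ equals $\sum_{k\ge1}$ of the multiplicity of $\ell/k$ in $L^\sharp$, and only finitely many lengths lie below any bound. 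This reduction needs some care about whether $L$ counts iterates; in either convention it identifies the primitive length multisets. Hence $\zeta_R^g=\zeta_R^{g_0}$ on a right half-plane, and by meromorphic continuation on all of $\mathbb C$. In particular $m_R^g(\lambda_0)=m_R^{g_0}(\lambda_0)$ for every $\lambda_0\in\mathbb C$.

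Next I apply \eqref{eq:zerozeta} at $\lambda_0=-\tfrac12+ir$ with $r\in\mathcal R_0$:
$$m_1^g(\lambda_0)-2m_0^g(\lambda_0)=m_R^{g}(\lambda_0)=m_R^{g_0}(\lambda_0).$$
Since algebraic multiplicities satisfy $m_1^g\ge 0$, this gives
$$2m_0^g(-\tfrac12+ir)\ge -m_R^{g_0}(-\tfrac12+ir).$$
Summing over $r\in\mathcal R_0\cap[\omega,\omega+\delta]$ and using \eqref{eq:Weyl} yields
$$2\sum m_0^g(-\tfrac12+ir)\ge \frac{\mathrm{Area}_{g_0}(M)}{2\pi}\omega\delta+o(\omega),$$
and dividing by $2$ gives the claimed bound with constant $\frac{\mathrm{Area}_{g_0}(M)}{4\pi}$.

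The one step requiring checking is the identification in \eqref{eq:Weyl} of $-m_R^{g_0}(-\tfrac12+ir)$ with $\nu_0(r)$ for large $r$. By \eqref{eq:zetaRel}, the point $-\tfrac12+ir$ is a pole of $\zeta_R^{g_0}$ through the zero of $Z_S^{g_0}(s+1)$ at $\tfrac12+ir$, and $Z_S^{g_0}(-\tfrac12+ir)\neq0$ because its zeros lie on $\tfrac12+i\mathbb R$ or on the real axis. This is already stated in \eqref{eq:Weyl}, so I take it as given.

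The main obstacle is therefore only the first step, namely making rigorous that equality of unmarked length spectra with multiplicity forces equality of the zeta functions. The Möbius-type inversion on the discrete length set described above handles it.
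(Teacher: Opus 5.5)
Your proposal is correct and follows the same route as the paper: $\zeta_R^g=\zeta_R^{g_0}$ on a half-plane, hence everywhere by analytic continuation, then dropping $m_1^g\geq 0$ in \eqref{eq:zerozeta} and summing against \eqref{eq:Weyl}. The only difference is that you justify the zeta-function equality by recovering the primitive lengths from $L(g)$, a step the paper asserts directly from \eqref{eq:Zeta}.
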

\begin{proof}
Since $L(g)=L(g_0)$, \eqref{eq:Zeta} implies that $\zeta_R^g(s)=\zeta_R^{g_0}(s)$ when $\mathrm{Re}(s)\gg 1$. The meromorphic extensions of the two functions coincide by analytic continuation. In particular, using \eqref{eq:Weyl} and \eqref{eq:zerozeta}, we obtain
\begin{align*}
2&\sum_{\omega \leq r\leq \omega+\delta,\ r\in \mathcal R_0}m_0^{g}(-\tfrac 12+ir)\geq \sum_{\omega \leq r\leq \omega+\delta,\ r\in \mathcal R_0}\big(2m_0^{g}(-\tfrac 12+ir)-m_1^{g}(-\tfrac 12+ir)\big)
\\&=-\sum_{\omega \leq r\leq \omega+\delta,\ r\in \mathcal R_0}m_R^{g}(-\tfrac 12+ir)=-\sum_{\omega \leq r\leq \omega+\delta,\ r\in \mathcal R_0}m_R^{g_0}(-\tfrac 12+ir)=\frac{\mathrm{Area}_{g_0}(M)}{2\pi}\omega \delta+o(\omega).
\end{align*}
This concludes the proof of the lemma.
\end{proof}

In \cite{FT}, two sequences of numbers $(\gamma_k^\pm(g))_{k\in \mathbb N}$ associated with the geodesic flow of $g$ are defined. We will not recall the exact definitions and refer to \cite[Equation (1.6)]{FT}. We will need the following bounds on these numbers (see \cite[Equation $(1.7)$]{FT} and the remark after):
$$\forall k\geq 0, \quad -\big(\frac 12 +k)\lambda_{\mathrm{max}}(g)\leq \gamma_k^-(g)\leq \gamma_k^+(g)\leq - \big(\frac 12 +k)\lambda_{\mathrm{min}}(g),$$
where $\lambda_{\mathrm{min}}(g)$ (resp. $\lambda_{\mathrm{max}}(g)$) denote the minimal (resp. maximal) Lyapunov exponent of $(\varphi_t^g)_{t\in \mathbb R}$. Suppose that $g$ is $\alpha$-pinched, that is,
$$\exists C>0, \quad -C\leq K_g\leq -\alpha C. $$ Then there is $c>0$ such that for any $t\geq 0$ and any $p\in SM$,
\begin{equation}
\label{eq:kli}
{c}^{-1}e^{-\sqrt{C} t} \leq \| d\varphi_ t^g|_{E_s(p)}\|\leq ce^{-\sqrt{\alpha C} t}, \quad {c}^{-1}e^{-\sqrt C t} \leq \| d\varphi_ {-t}^g|_{E_u(p)}\|\leq ce^{-\sqrt{\alpha C} t},
\end{equation}
see for instance \cite[Theorem 3.9.1]{Kli}. In particular, we deduce that for any $\alpha$-pinched metric, we have
\begin{equation}
\label{eq:pinchingcond}
\forall k\geq 0, \quad -\big(\frac 12 +k)\sqrt{C}\leq \gamma_k^-(g)\leq \gamma_k^+(g)\leq  -\big(\frac 12 +k)\sqrt{\alpha C}.
\end{equation}
We now apply \cite[Theorem 1.13]{FT} to show that if $g$ is sufficiently pinched, the resonances in the \emph{first band} $[\gamma_0^-(g),\gamma_0^+(g)]\times i\mathbb R$ concentrate, when $\mathrm{Im}(z)\to+\infty$, near $-\tfrac {1}2h_{\mathrm{Liou}}(g)+i\mathbb R$.
\begin{proposition}
\label{prop:FT} Let $(M,g)$ be a strictly $\tfrac 19$-pinched metric on a closed surface. Then for any $\eps>0$ small enough, we have
$$\lim_{\delta\to+\infty}\limsup_{\omega \to +\infty}\frac{1}{ \omega \delta}\mathrm{Card}\big(\sigma(\mathbf X_0^g)\cap \mathrm{Strip}_{\omega,\delta}\big)=0, $$
where the resonances are counted with their algebraic multiplicity and where
$$ \mathrm{Strip}_{\omega,\delta}:=\big( [\gamma_0^-(g)-\epsilon,\gamma_0^+(g)+\epsilon]\setminus [-\tfrac {1}2h_{\mathrm{Liou}}(g)-\epsilon,-\tfrac {1}2 h_{\mathrm{Liou}}(g)+\epsilon]\big)\times i[\omega,\omega+\delta].$$
\end{proposition}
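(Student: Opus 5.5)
The approach is to apply \cite[Theorem 1.13]{FT} directly. That result states that when the \emph{first band} $B_0:=[\gamma_0^-(g),\gamma_0^+(g)]\times i\mathbb R$ is separated from the other bands, i.e. $\gamma_1^+(g)<\gamma_0^-(g)$, the resonances of $\mathbf X_0^g$ in a neighbourhood of $B_0$, away from a spectral window, have asymptotic density concentrated on the vertical line $\mathrm{Re}(z)=\langle -\tfrac12 J\rangle$. Here $J$ is the relevant expansion rate, averaged with respect to the Liouville (SRB) measure. More precisely, for each $\eps>0$ the number of resonances in $B_0^\eps\cap\{\omega\le \mathrm{Im} z\le\omega+\delta\}$ whose real part is at distance at least $\eps$ from this line is $o(\omega\delta)$ in the double limit $\omega\to\infty$, then $\delta\to\infty$. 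So the plan has three steps. First, check the band-gap hypothesis from the pinching. Second, identify the concentration abscissa with $-\tfrac12 h_{\mathrm{Liou}}(g)$. Third, read off the statement.

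For the first step I use \eqref{eq:pinchingcond}. Strict $\tfrac19$-pinching means $-C<K_g<-\tfrac19 C$. By compactness we may shrink $C$ slightly and increase the lower constant, so that $g$ is $\alpha$-pinched with constants $C'<C$ and $\alpha C'$, where $\alpha>\tfrac19$ (equivalently, $-C'\le K_g\le -\alpha C'$ for some $\alpha>1/9$). Then \eqref{eq:pinchingcond} gives
\[
\gamma_1^+(g)\le -\tfrac32\sqrt{\alpha C'} < -\tfrac12\sqrt{C'}\le \gamma_0^-(g),
\]
since $3\sqrt{\alpha}>1$. Hence the first band is isolated from the second band by a gap of fixed width. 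For $\eps$ smaller than half this gap, the strip $[\gamma_0^- -\eps,\gamma_0^+ +\eps]$ contains only resonances that \cite{FT} attributes to the first band. This is exactly why the strict $\tfrac19$ condition appears, and it is the reason for the phrase ``$\eps$ small enough''.

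For the second step, recall that the function $\gamma_0$ appearing in \cite{FT} (whose extrema define $\gamma_0^\pm$) is $\tfrac12$ of the divergence of the flow restricted to $E_u$, up to sign. In the Liouville-volume-preserving case the concentration line is $\mathrm{Re}\,z=\int \tfrac12 J^s_g\,d\mu_g$, because the unstable and stable Jacobians have opposite Liouville averages by volume preservation. By \eqref{eq:SRBentropy} this equals $-\tfrac12 h_{\mathrm{Liou}}(g)$. I need to be careful with the conventions of \cite{FT} relative to \eqref{eq:Lie}, since they may use $-X$ or the $E_u$ Jacobian. With the opposite convention, time reversal swaps $E_s$ and $E_u$ and $\int J^u\,d\mu=-\int J^s\,d\mu$, so the line is the same.

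Finally, \cite[Theorem 1.13]{FT} gives precisely the vanishing of $\limsup_\omega \frac{1}{\omega\delta}\mathrm{Card}$ as $\delta\to\infty$ on $\mathrm{Strip}_{\omega,\delta}$, counting with algebraic multiplicity. It also shows the density on the line equals the Weyl term $\frac{\mathrm{Area}}{2\pi}\cdot$(normalisation), which is not needed here. I expect the main obstacle to be matching the hypotheses and normalisations of \cite{FT}: they work with general contact Anosov flows and define $\gamma_k^\pm$ via the unstable Jacobian. Beyond this, verifying that the ``contact'' and ``band separation'' assumptions hold is routine. The geodesic flow is contact and the separation comes from the first step.
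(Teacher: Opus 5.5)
Your proposal is correct and follows the paper's proof. You verify the band gap $\gamma_1^+(g)<\gamma_0^-(g)$ from \eqref{eq:pinchingcond} using $3\sqrt{\alpha}>1$, then invoke \cite[Theorem 1.13]{FT} and identify the concentration abscissa as $\tfrac12\int J^s_g\,d\mu_g=-\tfrac12 h_{\mathrm{Liou}}(g)$ via \eqref{eq:SRBentropy}; the paper states this identification as the narrower band $[\check\gamma_0^-(g),\check\gamma_0^+(g)]$ collapsing to that line because the bundle is trivial.
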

\begin{proof}
In order to apply \cite[Theorem 1.13]{FT} we need to verify that $g$ satisfies the \emph{pinching condition} $\gamma_1^+(g)\leq \gamma_0^-(g)$. Since $g$ is  strictly $\tfrac 19$-pinched, it is $\alpha$-pinched for some $\alpha>\tfrac 19$. We use \eqref{eq:pinchingcond} to obtain
\begin{equation}
\label{eq:upper}\gamma_1^+(g)\leq  -\frac 32 \sqrt{\alpha C}<-\frac 12 \sqrt{C}\leq \gamma_0^-(g). 
\end{equation}
By \cite[Theorem 1.13]{FT}, the resonances in the first band concentrate (in the sense made explicit in the statement of the proposition), when $\mathrm{Im}(z)\to +\infty$, near a narrower band $[\check{\gamma}_0^-(g),\check{\gamma}_0^+(g)]\times i\mathbb R$. Moreover, since the bundle is trivial, by the discussion after \cite[Theorem 1.13]{FT} we have 
$$\check{\gamma}_0^-(g)=\check{\gamma}_0^+(g)=\frac 12\int_{S^gM}J^s_g(v)d\mu_g(v)=-\frac 12 h_{\mathrm{Liou}}(g) ,$$
where we used \eqref{eq:SRBentropy}. This concludes the proof of the proposition.
\end{proof}
We can now conclude the proof of Theorem \ref{theo:hyp} combining the estimate obtained in Lemma \ref{lemm:Weyl} and the result of Proposition \ref{prop:FT}.
\begin{proof}[Proof of Theorem \ref{theo:hyp}]
Let $(M^2,g_0)$ be a closed hyperbolic surface. Let $g$ be a strictly $\tfrac 19$-pinched negatively curved metric on $M$ and suppose that $L(g)=L(g_0)$. From \eqref{eq:top2}, we first have
\begin{equation}
\label{eq:topequal}
h_{\mathrm{top}}(g)=h_{\mathrm{top}}(g_0)=1.
\end{equation} Recall that $g$ is $\alpha$-pinched for some $\alpha>\tfrac 19$, that is, 
$$\exists C>0, \quad -C\leq K_g\leq -\alpha C. $$
We claim that $C\geq 1$. Indeed, since $K_g\geq -C$, the Bishop-Gromov comparison theorem (see for instance \cite[Lemma 7.1.4]{Pet16}) gives $h_{\mathrm{vol}}(g)\leq \sqrt C$, where $h_{\mathrm{vol}}(g)$ is the volume entropy of $g$. In negative curvature, the volume entropy and topological entropy coincide \cite{Man79} and \eqref{eq:topequal} then implies that $C\geq 1$. 

We use \cite[Theorem 1.7]{FT} and Lemma \ref{lemm:Weyl} to deduce that $-\tfrac 12+i\mathbb R\subset [\gamma_k^-(g),\gamma_k^+(g)]\times i\mathbb R$ for some $k\geq 0$. Recall from \eqref{eq:upper} that $\gamma_1^+(g)<-\tfrac 12\sqrt C\leq -\tfrac 12$ which means that we must have
$$ -\tfrac 12+i\mathbb R\subset [\gamma_0^-(g),\gamma_0^+(g)]\times i\mathbb R.$$
Next, suppose for a contradiction that $h_{\mathrm{Liou}}(g)\neq 1$, then $-\frac 12 h_{\mathrm{Liou}}(g)\neq -\tfrac 12$ and we can choose $\epsilon>0$ small enough in Proposition \ref{prop:FT} such that for any $\omega,\delta>0$, we have 
$$ -\tfrac 12+i[\omega,\omega+\delta]\subset \mathrm{Strip}_{\omega,\delta}.$$
Applying Lemma \ref{lemm:Weyl} thus gives
\begin{align*} \lim_{\delta\to+\infty}\limsup_{\omega \to +\infty}\frac{1}{ \omega \delta}\mathrm{Card}\big(\sigma(\mathbf X_0^g)\cap \mathrm{Strip}_{\omega,\delta}\big)&\geq \lim_{\delta\to+\infty}\limsup_{\omega \to +\infty}\frac{1}{ \omega \delta}\sum_{\omega \leq r\leq \omega+\delta,\ r\in \mathcal R_0}m_0^{g}(-\tfrac 12+ir) 
\\&\geq \frac{\mathrm{Area}_{g_0}(M)}{4\pi}>0.
\end{align*}
This contradicts the statement of Proposition \ref{prop:FT} and thus $h_{\mathrm{Liou}}(g)= 1$. We have shown that 
$h_{\mathrm{Liou}}(g)= 1= h_{\mathrm{top}}(g),$
which implies that $g$ is hyperbolic by \cite{Ka} and this concludes the proof of the theorem.
\end{proof}
The following lemma is the last ingredient needed to show Theorem \ref{theo:main}. \begin{lemma}
\label{lemma:final}
Let $(M^2,g_0)$ be a closed hyperbolic surface. There is $\epsilon>0$ such that if $g_1$ is another hyperbolic metric with $\|g_1-g_0\|_{C^0}<\epsilon$ and $L(g_1)=L(g_0)$, then $g_0$ and $g_1$ are isometric.
\end{lemma}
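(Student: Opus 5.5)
Fix a finite collection of closed geodesics in $g_0$ whose lengths determine the hyperbolic metric up to isotopy, i.e.\ whose length functions give a local coordinate system (or at least a local injective map) on Teichmüller space near the class of $g_0$. Then use the $C^0$-closeness of $g_1$ to $g_0$ to force, via the discreteness of $L(g_0)$, the marked lengths of these finitely many classes to coincide. Marked length spectrum rigidity locally, or simply the injectivity of the length coordinates, then gives that $g_1$ is isometric to $g_0$.

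\textbf{Step 1 (finitely many classes determine the metric).} Choose classes $c_1,\dots,c_N\in\mathcal C$ such that the map $\mathcal T(M)\to\mathbb R^N$, $[g]\mapsto(\mathcal L_g(c_j))_j$, is injective. This holds for $N=9g-9$ by a classical result, or one can use Fenchel--Nielsen coordinates built from finitely many curves. Here $\mathcal T(M)$ is hyperbolic metrics modulo diffeomorphisms isotopic to the identity. Let $T:=\max_j\mathcal L_{g_0}(c_j)+1$.

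\textbf{Step 2 (comparison of lengths).} If $\|g_1-g_0\|_{C^0}<\epsilon$, then $(1+C\epsilon)^{-1}g_0\le g_1\le(1+C\epsilon)g_0$ as quadratic forms. Hence for every class $c$ we have $(1+C\epsilon)^{-1/2}\mathcal L_{g_0}(c)\le\mathcal L_{g_1}(c)\le(1+C\epsilon)^{1/2}\mathcal L_{g_0}(c)$, obtained by comparing the length of each metric's geodesic under the other metric and using that geodesics minimize length in their class.

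\textbf{Step 3 (pinning down the finite data).} The set $L(g_0)\cap(0,T]$ is finite. Let $\eta>0$ be smaller than half the minimal gap between its distinct elements and smaller than the distance from $T$ to the next element. Choose $\epsilon$ so that $(1+C\epsilon)^{1/2}T-T<\eta$. Then for each $j$, $\mathcal L_{g_1}(c_j)\in L(g_1)=L(g_0)$ lies within $\eta$ of $\mathcal L_{g_0}(c_j)$, so the two are equal.

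\textbf{Step 4 (conclusion).} By the injectivity in Step 1, $g_1$ and $g_0$ define the same point of $\mathcal T(M)$, so they are isometric, even via a diffeomorphism isotopic to the identity.

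The main obstacle is Step 1, the existence of finitely many classes whose marked lengths determine a point of Teichmüller space. I would invoke the known $9g-9$ curve theorem, or alternatively $6g-6$ curves giving local coordinates near $[g_0]$. In the latter case one also needs $g_1$ to be close to $g_0$ in $\mathcal T(M)$, which follows from Step 2 because marked lengths are uniformly close. The remaining steps are routine.
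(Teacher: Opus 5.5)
Your proposal is correct and follows the paper's argument: bound marked lengths multiplicatively via $C^0$-closeness, use discreteness of $L(g_0)$ to force $\mathcal L_{g_1}(c)=\mathcal L_{g_0}(c)$ on a finite set of classes, and invoke the $9g-9$ theorem (the paper cites Farb--Margalit, Theorem 10.7). The only cosmetic difference is at the end: you conclude directly from injectivity of the length map on Teichm\"uller space, whereas the paper first recovers the full marked length spectrum and then applies Croke--Otal.
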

\begin{proof}
Suppose that $L(g_1)=L(g_0)$ and that $\|g_1-g_0\|_{C^0}<\epsilon$, then
$$\mathcal L_{g_1}(c)\leq \int_0^{\ell_{g_0}(\gamma_{g_0}(c))}\sqrt{g_1( \dot\gamma_{g_0}(c),\dot\gamma_{g_0}(c))}dt\leq \int_0^{\ell_{g_0}(\gamma_{g_0}(c))}\sqrt{1+\epsilon}dt=\sqrt{1+\epsilon}\mathcal L_{g_0}(c).$$
A similar computation gives a lower bound and we have 
$$\forall c\in \mathcal C, \quad \sqrt{1-\epsilon}\mathcal L_{g_0}(c)\leq \mathcal L_{g_1}(c) \leq \sqrt{1+\epsilon}\mathcal L_{g_0}(c). $$
Since the length spectrum is discrete, we deduce that
$$\forall c\in \mathcal C, \ \exists \epsilon_c>0, \ L(g_0)=L(g_1), \|g_0-g_1\|_{C^0}<\epsilon_c \ \Rightarrow \  \mathcal L_{g_0}(c)= \mathcal L_{g_1}(c).$$
Now, for hyperbolic metrics, there is a finite set $F\subset \mathcal C$ such that the marked length spectrum of the metric is determined from its restriction to $F$, see for instance \cite[Theorem 10.7]{FaMa}. In particular, if $\epsilon>0$ is chosen so that $0<\epsilon<\min_{c\in F}\epsilon_c$, then $L(g_1)=L(g_0)$ implies $\mathcal L_{g_0}=\mathcal L_{g_1}$. This implies that $g_0$ and $g_1$ are isometric by \cite{Cro90,Ota90}.
\end{proof}

\bibliography{biblio}

@article{TsuZh,
author = {Tsujii, Masato and Zhang, Zhiyuan},
copyright = {Distributed under a Creative Commons Attribution 4.0 International License},
issn = {0003-486X},
journal = {Annals of Mathematics},
language = {eng},
number = {1},
publisher = {Princeton University, Department of Mathematics},
title = {Smooth mixing {A}nosov flows in dimension three are exponentially mixing},
volume = {197},
year = {2023},
}

@article{zazi,
  title={Ruelle zeta function at zero for surfaces},
  author={Semyon Dyatlov and Maciej Zworski},
  journal={Inventiones mathematicae},
  year={2016},
  volume={210},
  pages={211-229},
  url={https://api.semanticscholar.org/CorpusID:23047596}
}

@article{Butt26,
  author  = {Butt, Karen},
  title   = {Approximate control of the marked length spectrum by short geodesics},
  journal = {Ergodic Theory and Dynamical Systems},
  volume  = {46},
  number  = {2},
  pages   = {490--513},
  year    = {2026},
  doi     = {10.1017/etds.2025.10212}
}

@article{Schenck2020,
  author  = {Schenck, Emmanuel},
  title   = {Exponential gaps in the length spectrum},
  journal = {Journal of Modern Dynamics},
  volume  = {16},
  year    = {2020},
  pages   = {207--223},
  doi     = {10.3934/jmd.2020007}
}

@article{Man79,
  author  = {Manning, Anthony},
  title   = {Topological entropy for geodesic flows},
  journal = {Ann. of Math. (2)},
  volume  = {110},
  number  = {3},
  pages   = {567--573},
  year    = {1979},
  doi     = {10.2307/1971239}
}

@book{Pet16,
  author    = {Petersen, Peter},
  title     = {Riemannian Geometry},
  series    = {Graduate Texts in Mathematics},
  volume    = {171},
  edition   = {Third},
  publisher = {Springer},
  address   = {Cham},
  year      = {2016},
  doi       = {10.1007/978-3-319-26654-1}
}

@book{Kli,
  author    = {Klingenberg, Wilhelm},
  title     = {Riemannian Geometry},
  series    = {De Gruyter Studies in Mathematics},
  volume    = {1},
  publisher = {Walter de Gruyter},
  address   = {Berlin and New York},
  year      = {1982}
}

@book{Hej76,
  author    = {Hejhal, Dennis A.},
  title     = {The Selberg Trace Formula for
               {$\mathrm{PSL}(2,\mathbb{R})$}. Volume I},
  series    = {Lecture Notes in Mathematics},
  volume    = {548},
  publisher = {Springer-Verlag},
  address   = {Berlin},
  year      = {1976},
  doi       = {10.1007/BFb0079608},
  isbn      = {978-3-540-07988-0}
}

@article{Baladi98,
  author  = {Baladi, Viviane},
  title   = {Periodic Orbits and Dynamical Spectra},
  journal = {Ergodic Theory and Dynamical Systems},
  volume  = {18},
  number  = {2},
  pages   = {255--292},
  year    = {1998},
  doi     = {10.1017/S0143385798113925}
}

@article{DyaZw,
author = {Dyatlov, Semyon and Zworski, Maciej},
issn = {0012-9593},
journal = {Annales scientifiques de l'École normale supérieure},
language = {eng},
number = {3},
pages = {543-577},
publisher = {Société Mathématique de France},
title = {Dynamical zeta functions for Anosov flows via microlocal analysis},
volume = {49},
year = {2016},
}

@article{Liv,
author = {Liverani, Carangelo},
address = {Princeton, NJ},
copyright = {Copyright 2004 Princeton University (Mathematics Department)},
issn = {0003-486X},
journal = {Annals of Mathematics},
number = {3},
pages = {1275-1312},
publisher = {Princeton University Press},
title = {On Contact {A}nosov Flows},
volume = {159},
year={2004}
}

@article{GL19,
  title={The marked length spectrum of {A}nosov manifolds},
  author={Guillarmou, Colin and Lefeuvre, Thibault},
  journal={Annals of Mathematics},
  volume={190},
  number={1},
  pages={321--344},
  year={2019},
  publisher={JSTOR}
}

@book{FaMa,
  author    = {Farb, Benson and Margalit, Dan},
  title     = {A primer on mapping class groups},
  series    = {Princeton Mathematical Series},
  volume    = {49},
  publisher = {Princeton University Press},
  address   = {Princeton, NJ},
  year      = {2012},
  isbn      = {978-0-691-14794-9}
}

@article{DG75,
  author  = {Duistermaat, J. J. and Guillemin, V. W.},
  title   = {The spectrum of positive elliptic operators and periodic bicharacteristics},
  journal = {Invent. Math.},
  volume  = {29},
  number  = {1},
  pages   = {39--79},
  year    = {1975},
  doi     = {10.1007/BF01405172}
}

@article{Hu,
  author  = {Huber, Heinz},
  title   = {Zur analytischen Theorie hyperbolischer {Raumformen} und {Bewegungsgruppen}},
  journal = {Math. Ann.},
  volume  = {138},
  pages   = {1--26},
  year    = {1959},
  doi     = {10.1007/BF01369663}
}

@article{Sha09,
  author  = {Sharafutdinov, Vladimir A.},
  title   = {Local audibility of a hyperbolic metric},
  journal = {Siberian Math. J.},
  volume  = {50},
  number  = {5},
  pages   = {929--944},
  year    = {2009},
  doi     = {10.1007/s11202-009-0103-7}
}

@article{FT,
  author  = {Faure, Fr{\'e}d{\'e}ric and Tsujii, Masato},
  title   = {Micro-local analysis of contact {Anosov} flows and band
             structure of the {Ruelle} spectrum},
  journal = {Communications of the American Mathematical Society},
  volume  = {4},
  number  = {15},
  pages   = {641--745},
  year    = {2024},
  doi     = {10.1090/cams/40}
}

@misc{Gogolev18,
  author       = {Gogolev, Andrey},
  title        = {Length Spectrum Rigidity},
  year         = {2018},
  month        = aug,
  day          = {21},
  howpublished = {Blog post},
  url          = {https://andreygogolev.wordpress.com/2018/08/21/length-spectrum-rigidity/},
  urldate      = {2026-09-22}
}

@misc{DWWMRO,
      title={A finite Livshits theorem and local length spectrum rigidity}, 
      author={Jonathan DeWitt and Spencer Durham and James Marshall Reber and Thomas Aloysius O'Hare},
      year={2026},
      eprint={2609.23762},
      archivePrefix={arXiv},
      primaryClass={math.DS},
      url={https://arxiv.org/abs/2609.23762}, 
}

@article{DJ16,
title = {On small gaps in the length spectrum},
journal = {Journal of Modern Dynamics},
volume = {10},
number = {2},
pages = {339-352},
year = {2016},
issn = {1930-5311},
doi = {10.3934/jmd.2016.10.339},
url = {https://www.aimsciences.org/article/id/0d52ef5f-746f-4f41-b791-e02e06d8aad2},
author = {Dmitry  Dolgopyat and Dmitry Jakobson}
}

@article{GuKnLef, title={Geodesic stretch, pressure metric and marked length spectrum rigidity}, volume={42}, DOI={10.1017/etds.2021.75}, number={3}, journal={Ergodic Theory and Dynamical Systems}, author={Guillarmou, Colin and Knieper, Gerhard and Lefeuvre, Thibault}, year={2022}, pages={974–1022}}

@articleInfo{BL,
title = {Smooth Anosov flows: Correlation spectra and stability},
journal = {Journal of Modern Dynamics},
volume = {1},
number = {2},
pages = {301-322},
year = {2007},
issn = {1930-5311},
doi = {10.3934/jmd.2007.1.301},
url = {https://www.aimsciences.org/article/id/b30e2a63-0216-4f6a-bd53-590a073d927d},
author = {Oliver Butterley and Carlangelo Liverani}
}

@article{BKL,
doi = {10.1088/0951-7715/15/6/309},
url = {https://dx.doi.org/10.1088/0951-7715/15/6/309},
year = {2002},
month = {sep},
publisher = {},
volume = {15},
number = {6},
pages = {1905},
author = {Michael Blank and Gerhard Keller and Carlangelo Liverani},
title = {Ruelle–Perron–Frobenius spectrum for Anosov maps},
journal = {Nonlinearity}
}

@article{BT,
     author = {Baladi, Viviane and Tsujii, Masato},
     title = {Anisotropic {H\"older} and {Sobolev} spaces for hyperbolic diffeomorphisms},
     journal = {Annales de l'Institut Fourier},
     pages = {127--154},
     publisher = {Association des Annales de l{\textquoteright}institut Fourier},
     volume = {57},
     number = {1},
     year = {2007},
     doi = {10.5802/aif.2253},
     zbl = {1138.37011},
     mrnumber = {2313087},
     language = {en},
     url = {http://www.numdam.org/articles/10.5802/aif.2253/}
}

@article{burnskatok,
  title={Manifolds with non-positive curvature},
  author={Burns, Keith and Katok, Anatole and Ballman, Werner and Brin, Michael and Eberlein, Patrick and Osserman, Robert},
  journal={Ergodic Theory and Dynamical Systems},
  volume={5},
  number={2},
  pages={307--317},
  year={1985},
  publisher={Cambridge University Press}
}

@article{GuLefPa,
  title={Marked length spectrum rigidity for {A}nosov surfaces},
  author={Guillarmou, Colin and Lefeuvre, Thibault and Paternain, Gabriel P},
  journal={Duke Mathematical Journal},
  volume={174},
  number={1},
  pages={131--157},
  year={2025},
  publisher={Duke University Press}
}

@article{Vig80,
  author  = {Vign{\'e}ras, Marie-France},
  title   = {Vari{\'e}t{\'e}s riemanniennes isospectrales et non isom{\'e}triques},
  journal = {Ann. of Math. (2)},
  volume  = {112},
  number  = {1},
  pages   = {21--32},
  year    = {1980},
  doi     = {10.2307/1971319}
}

@article{Ka,
title = "Entropy and closed geodesics",
author = "Anatole Katok",
year = "1982",
volume = "2",
pages = "339--365",
journal = "Ergodic Theory and Dynamical Systems",
issn = "0143-3857",
publisher = "Cambridge University Press",
number = "3-4",
}

@article{GK80,
  title={Some inverse spectral results for negatively curved 2-manifolds},
  author={Guillemin, Victor and Kazhdan, David},
  journal={Topology},
  volume={19},
  number={3},
  pages={301--312},
  year={1980},
  publisher={Pergamon}
}

@article{CS98,
  author  = {Croke, Christopher B. and Sharafutdinov, Vladimir A.},
  title   = {Spectral rigidity of a compact negatively curved manifold},
  journal = {Topology},
  volume  = {37},
  number  = {6},
  pages   = {1265--1273},
  year    = {1998},
  doi     = {10.1016/S0040-9383(97)00086-4}
}

@article{ota90,
  title={Le spectre marqu{\'e} des longueurs des surfaces {\`a} courbure n{\'e}gative},
  author={Otal, Jean-Pierre},
  journal={Annals of Mathematics},
  volume={131},
  number={1},
  pages={151--162},
  year={1990},
  publisher={JSTOR}
}

@article{cro90,
  title={Rigidity for surfaces of non-positive curvature},
  author={Croke, Christopher B},
  journal={Commentarii Mathematici Helvetici},
  volume={65},
  number={1},
  pages={150--169},
  year={1990},
  publisher={Springer}
}

@article{ham99,
  title={Cocycles, symplectic structures and intersection},
  author={Hamenst{\"a}dt, Ursula},
  journal={Geometric \& Functional Analysis GAFA},
  volume={9},
  number={1},
  pages={90--140},
  year={1999},
  publisher={Springer}
}

@article{But25,
  author  = {Butt, Karen},
  title   = {Quantitative marked length spectrum rigidity},
  journal = {Geom. Topol.},
  volume  = {29},
  number  = {8},
  pages   = {3995--4054},
  year    = {2025},
  doi     = {10.2140/gt.2025.29.3995}
}

@misc{but1,
      title={Quantitative marked length spectrum rigidity for surfaces}, 
      author={Karen Butt},
      year={2025},
      eprint={2509.16829},
      archivePrefix={arXiv},
      primaryClass={math.DG},
      url={https://arxiv.org/abs/2509.16829}, 
}

@article{Sun85,
  author  = {Sunada, Toshikazu},
  title   = {Riemannian coverings and isospectral manifolds},
  journal = {Ann. of Math. (2)},
  volume  = {121},
  number  = {1},
  pages   = {169--186},
  year    = {1985},
  doi     = {10.2307/1971195}
}

@article{GouLiv,
author = {Gouëzel, Sébastien and Liverani, Carlangelo},
copyright = {Copyright 2008 Lehigh University},
issn = {0022-040X},
journal = {Journal of differential geometry},
language = {eng},
number = {3},
pages = {433-477},
publisher = {Lehigh University},
title = {Compact locally maximal hyperbolic sets for smooth maps: fine statistical properties},
volume = {79},
year = {2008},
}

@misc{Fau08,
      title={Semi-classical approach for Anosov diffeomorphisms and Ruelle resonances}, 
      author={Faure, Frederic and Roy, Nicolas and Sjöstrand,Johannes},
      year={2008},
      eprint={0802.1780},
      archivePrefix={arXiv},
      primaryClass={nlin.CD}
}

@article{Fau10,
author = {Faure, Frédéric and Sjöstrand, Johannes},
address = {Berlin/Heidelberg},
copyright = {Springer-Verlag 2011},
issn = {0010-3616},
journal = {Communications in Mathematical Physics},
number = {2},
pages = {325-364},
publisher = {Springer-Verlag},
title = {Upper Bound on the Density of {R}uelle Resonances for {A}nosov Flows},
volume = {308},
year = {2011},
}

@book{GuMaz,
    author ={Guillarmou, Colin and Mazzuchelli, Marco} ,
    title = {An introduction to geometric inverse problems},
     pubsliher={to appear in the series Graduate Studies in Mathematics, AMS},
     year={2026}
}

@book{FishHas,
author = {Fisher, Todd and Hasselblatt, Boris},
address = {Zuerich, Switzerland},
isbn = {3-03719-700-5},
language = {eng},
publisher = {European Mathematical Society Publishing House},
series = {Zurich Lectures in Advanced Mathematics (ZLAM)},
title = {Hyperbolic Flows},
year = {2019 - 1210},
}
\bibliographystyle{alpha}

\end{document}